\newtheorem{thm}{Theorem}[section]
\newtheorem{prop}[thm]{Proposition}
\newtheorem{lem}[thm]{Lemma}
\newtheorem{cor}[thm]{Corollary}
\numberwithin{equation}{section}
\theoremstyle{definition}
\newtheorem{remark}[thm]{Remark}
\newcommand{\cal}{\mathcal}
\newcommand{\ka}{{\cal A}}
\newcommand{\kb}{{\cal B}}
\newcommand{\ko}{{\cal O}}
\newcommand{\kp}{{\cal P}}
\newcommand{\ZZ}{\mathbb{Z}}
\newcommand{\QQ}{\mathbb{Q}}
\newcommand{\CC}{\mathbb{C}}
\newcommand{\PP}{\mathbb{P}}
\renewcommand{\to}{\xymatrix@1@=15pt{\ar[r]&}}
\renewcommand{\rightarrow}{\xymatrix@1@=15pt{\ar[r]&}}
\renewcommand{\leftarrow}{\xymatrix@1@=15pt{&\ar[l]}}
\renewcommand{\mapsto}{\xymatrix@1@=15pt{\ar@{|->}[r]&}}
\renewcommand{\twoheadrightarrow}{\xymatrix@1@=18pt{\ar@{->>}[r]&}}
\renewcommand{\hookrightarrow}{\xymatrix@1@=15pt{\ar@{^(->}[r]&}}
\newcommand{\hook}{\xymatrix@1@=15pt{\ar@{^(->}[r]&}}
\newcommand{\congpf}{\xymatrix@1@=15pt{\ar[r]^-\sim&}}
\renewcommand{\cong}{\simeq}
\begin{document}

\title[]{Lagrangian fibrations of hyperk\"ahler fourfolds}

\author[D.\ Huybrechts and C.\ Xu]{Daniel Huybrechts and Chenyang Xu}

\address{DH: Mathematisches Institut and Hausdorff Center for Mathematics,
Universit{\"a}t Bonn, Endenicher Allee 60, 53115 Bonn, Germany}
\email{huybrech@math.uni-bonn.de}

\address{CX: Mathematics Department, MIT,
77 Massachusetts Avenue, 
Cambridge, MA 02139, 
USA\\ \&
BICMR, Beijing, China}
\email{cyxu@mit.edu,  cyxu@math.pku.edu.cn }

\begin{abstract} \noindent
The base surface $B$ of a Lagrangian fibration $X\twoheadrightarrow B$ of a
projective, irreducible symplectic fourfold $X$ is shown to be isomorphic to $\PP^2$.

 \vspace{-2mm}
\end{abstract}

\maketitle
{\let\thefootnote\relax\footnotetext{DH is supported by the SFB/TR 45 `Periods,
Moduli Spaces and Arithmetic of Algebraic Varieties' of the DFG
(German Research Foundation).

CX is partially supported by a Chern Professorship of the MSRI (NSF No. DMS-1440140) and by the
National Science Fund for Distinguished Young Scholars (NSFC 11425101) `Algebraic Geometry'. }
\marginpar{}
}

For a morphism  $X\twoheadrightarrow B$ from
a projective, irreducible symplectic manifold $X$ onto a (normal) variety $B$, 
Matsushita  \cite{MatsFibre,MatsFibre2} proves that only three situations can occur:
The morphism is generically finite, constant, or it describes  a Lagrangian fibration. 

Moreover, it has been generally conjectured that the normal base $B$ of any connected Lagrangian
fibration $X\twoheadrightarrow B$ of a compact, irreducible symplectic manifold $X$
of dimension $2n$  is isomorphic to $\PP^n$, cf.\  \cite[Sec.\ 21.4]{Huy}. The conjecture has been verified
for deformations of Hilbert schemes of K3 surfaces by Markman
\cite{Mark} and for the case that $X$ is projective and $B$ is smooth by Hwang \cite{Hwang}. In dimension
four and assuming smoothness of the base,  the conjecture follows easily
from the ampleness of $\omega_B^*$,
the observation that the Hodge index theorem on $X$ implies $b_2(B)=\rho(B)=1$,
and the classification theory of surfaces,  see  \cite{Marku,MatsFibre}.

By building upon work of Ou \cite{Ou}, the present paper completes
the verification of the conjecture in dimension four:

\begin{thm}\label{thm:main1}
Assume  $X\twoheadrightarrow B$ is a connected Lagrangian fibration
of a projective, irreducible symplectic fourfold $X$ over a normal surface $B$.
Then $B\cong \PP^2$.
\end{thm}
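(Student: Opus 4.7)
By Hwang's theorem \cite{Hwang}, $B\cong\PP^2$ whenever $B$ is smooth, so it suffices to rule out the case where $B$ is singular. Starting from Ou's work \cite{Ou}, I may assume from the outset that $B$ is a normal projective $\QQ$-factorial surface with at worst klt singularities, Picard number $\rho(B)=1$, and $-K_B$ ample; in other words, $B$ is a log del Pezzo surface of Picard rank one possessing at least one singular point, and the aim is to derive a contradiction.

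The first step extracts cohomological constraints from the fibration. Extending Matsushita's theorem to the normal setting yields $R^i\pi_*\ko_X \cong \Omega^{[i]}_B$, and combining the Leray spectral sequence with the Hodge numbers of the irreducible symplectic fourfold $X$ produces explicit values for the Euler characteristics $\chi(B,\Omega^{[j]}_B)$ and forces the vanishing of natural cohomology groups such as $H^0(B,\omega_B)$ and $H^0(B,\Omega^{[1]}_B)$. In particular, $\chi(B,\ko_B)=1$ and the reflexive Hodge diamond of $B$ coincides numerically with that of $\PP^2$.

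The second step translates this into local constraints on the singularities of $B$. Reid's Riemann-Roch formula for surfaces with cyclic quotient singularities converts $\chi(B,\ko_B)=1$, together with the self-intersection $K_B^2$ and the degree of an ample generator of $\Pic(B)_\QQ$, into rigid numerical conditions on the local types of the singular points. Combined with $\rho(B)=1$ and the ampleness of $-K_B$, this narrows $B$ down to a short list of candidate log del Pezzo surfaces of Picard rank one with the permitted singularity types, such as certain weighted projective planes $\PP(1,a,b)$.

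The final and hardest step is to eliminate the remaining singular candidates. Here one must use genuinely that $B$ arises as the base of a Lagrangian fibration of an irreducible symplectic fourfold $X$, not merely the numerical invariants: natural inputs are the isotropy of $\pi^*H^2(B,\QQ)$ with respect to the Beauville-Bogomolov form on $H^2(X,\QQ)$, lattice-theoretic constraints on its orthogonal complement, and the structure of the singular fibers of $\pi$ over the singular points of $B$. The main obstacle lies in running this case-by-case elimination; unlike the smooth setting, where $\PP^2$ is the unique log del Pezzo of Picard rank one, the singular candidates form an \emph{a priori} infinite family (weighted projective planes and further klt del Pezzos), so the crux is leveraging the symplectic fourfold $X$ to rule them out uniformly, or one by one.
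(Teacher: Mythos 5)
There is a genuine gap: your plan stops exactly where the real difficulty begins. Steps one and two (Matsushita-type computations of $R^i\pi_*\ko_X$, Reid's Riemann--Roch, a numerical shortlist of log del Pezzo candidates) are plausible softening moves, but your third step --- eliminating the remaining singular candidates via the Beauville--Bogomolov form and lattice theory --- is only described, not executed, and you concede as much. As stated it is unlikely to close: since $\rho(B)=1$, the isotropic subspace $\pi^*H^2(B,\QQ)\subset H^2(X,\QQ)$ is a single line, which carries essentially no information about the local analytic type of the singular points of $B$, and no mechanism is offered for ruling out even one concrete candidate such as a weighted projective plane. A proof cannot end with ``the crux is leveraging $X$ to rule them out.''

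You also underuse the input you cite. Ou's theorem \cite{Ou} does far more than put $B$ on a list of log del Pezzo surfaces of Picard rank one: it shows that either $B\cong\PP^2$ or $B$ is a specific Fano surface with exactly \emph{one} singular point, which is an $E_8$-singularity (equivalently, $B$ is locally $\CC^2/G$ with $G$ the binary icosahedral group). So the ``a priori infinite family'' you worry about is already a single case, and the entire content of the paper is the exclusion of that one case (Theorem \ref{thm:main2}). The method used there is completely different from what you propose and is local rather than lattice-theoretic: one passes to the smooth cover $U\to U/G$ of the singularity, pulls back the fibration to obtain an \'etale cover $Y\to X$ on whose central fibre $G$ acts \emph{freely}, then uses Alexeev's stable semiabelic compactification, the Halle--Nicaise classification of essential skeleta of degenerating abelian surfaces, and the identification of the skeleton with a dual complex (Nicaise--Xu, Koll\'ar--Nicaise--Xu) to produce a $G$-invariant subvariety of the central fibre that is dominated by an abelian, rational, or $\PP^1\times E$ surface; group-theoretic properties of the binary icosahedral group then show no such variety admits a free $G$-action, a contradiction. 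Some ingredient of this kind --- exploiting the fundamental group of the singularity link, not the cohomology of $X$ --- is what your outline is missing.
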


To prove this result, we study the local situation and exclude the case of $E_8$-singularities:

\begin{thm}\label{thm:main2}
Assume $X\to B$ is a projective Lagrangian fibration of a quasi-projective symplectic fourfold over a normal algebraic surface $B$.
If $B$ is locally analytically at a point $0\in B$ of the form $\CC^2/G$ for a finite subgroup $G\subset{\rm GL}(2,\CC)$, then
$G$ is not the binary icosahedral group.
\end{thm}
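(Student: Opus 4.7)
The plan is to argue by contradiction, assuming $G\cong 2I$, the binary icosahedral group of order $120$, so that $(B,0)$ is a Du Val singularity of type $E_8$. The attack combines a representation-theoretic constraint on $2I$ with the geometry of the resolution and the canonical bundle formula for Lagrangian fibrations.

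First, I analyze the local monodromy. Because $2I\subset\mathrm{SU}(2)$ acts freely on $\CC^2\setminus\{0\}$, the local analytic fundamental group of $B\setminus\{0\}$ at $0$ is $G$. The variation of Hodge structure $R^1 f_*\QQ$ of the Lagrangian fibration gives, over a punctured neighborhood of $0$, a local system with monodromy in $\mathrm{Sp}(H^1(F,\QQ))\cong\mathrm{Sp}(4,\QQ)$, where $F$ is a general smooth fiber (an abelian surface). The representation theory of $2I$ over $\QQ$ is highly restrictive: its two faithful four-dimensional complex irreducibles are Galois conjugate over $\QQ(\sqrt 5)$ and hence not defined over $\QQ$, while the only normal subgroups are $1$, $\{\pm 1\}$, and $2I$ itself. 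Consequently, any four-dimensional $\QQ$-representation of $2I$ must factor through $A_5=2I/\{\pm 1\}$ or be trivial, so the local monodromy coming from $G$ is drastically constrained.

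Second, I pass to the minimal crepant resolution $\pi\colon\tilde B\to B$, whose exceptional divisor is the chain of smooth rational $(-2)$-curves $E_1,\ldots,E_8$ forming the $E_8$ Dynkin diagram. After a suitable modification of the fiber product, one obtains an abelian-surface fibration $\tilde f\colon\tilde X\to\tilde B$ extending $f$. The Kawamata--Ambro canonical bundle formula for K-trivial fibrations yields a relation
\[
K_{\tilde B}+\Delta^{\mathrm{disc}}+M\sim_\QQ 0,
\]
in which the coefficient of $\Delta^{\mathrm{disc}}$ along $E_i$ is determined by the conjugacy class of the local monodromy around $E_i$, and $M$ is a nef moduli $\QQ$-divisor. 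Because these monodromies factor through $A_5$ by the first step, each coefficient lies in a small finite set of admissible rational values (coming from elements of $A_5$ of orders $1,2,3,5$). Intersecting the displayed relation with each $E_j$ produces a linear system for the eight exceptional coefficients which, combined with the nefness of $M$ and the negative-definite $E_8$ intersection matrix, should admit no solution.

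The main obstacle is the bookkeeping of the original discriminant $\Delta\subset B$ of $f$ and how its strict transform interacts with the $E_i$: a component of $\Delta$ passing through $0$ contributes further loops to $\pi_1$, so the restriction that the monodromy factors through $A_5$ applies only to the composite coming from the image of $G$. I would invoke Ou's analysis in \cite{Ou} to classify how $\Delta$ can appear near a Du Val singularity of $B$, so that this last step reduces to a finite, mechanical check that is incompatible with the rigidity of the $E_8$ intersection pattern.
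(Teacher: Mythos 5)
Your proposal is a genuinely different strategy from the paper's (which works with an \'etale local cover $Y\to U$ on which $G$ acts freely on the central fibre, extends the family $G$-equivariantly to Alexeev's stable semiabelic pairs, and then plays the group theory of $2I$ against the Halle--Nicaise classification of essential skeleta of abelian degenerations), but as it stands it has two gaps that I do not see how to close. The first is the monodromy step. Your representation-theoretic conclusion is essentially right --- $2I$ has no faithful four-dimensional representation over $\QQ$ (though not for the reason you give: there is only one faithful $4$-dimensional irreducible, $\mathrm{Sym}^3$ of the spin representation; it has rational character and fails to be defined over $\QQ$ because it is quaternionic, i.e.\ has Schur index $2$) --- but this constrains the monodromy only if the local system $R^1f_*\QQ$ is defined on the whole punctured neighbourhood of $0$, i.e.\ only if the discriminant $\Delta$ of $f$ avoids $0$. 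In that case one does not need monodromy at all: the pulled-back family over $U\setminus\{0\}$ extends to a smooth abelian fibration over the ball $U$, and $2I$ would act freely on the central abelian surface, which is already impossible. The only hard case is $0\in\Delta$, and there the local fundamental group of $(B\setminus\Delta)$ near $0$ is a (typically infinite) group that merely surjects onto $G$ with no canonical splitting, so the statement ``the monodromy around $E_i$ factors through $A_5$'' has no meaning and the admissible-coefficient list you want to feed into the canonical bundle formula is not available.

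The second gap is the final ``finite, mechanical check.'' Intersecting $K_{\tilde B}+\Delta^{\mathrm{disc}}+M\sim_\QQ 0$ with the $(-2)$-curves $E_j$ of the crepant resolution gives $K_{\tilde B}\cdot E_j=0$ and hence $\bigl(\sum a_iE_i\bigr)\cdot E_j\le 0$ with $a_i\ge 0$; such systems always have solutions (e.g.\ $a_i$ proportional to the fundamental cycle), so negative definiteness of the $E_8$ lattice alone produces no contradiction. Indeed, the identical formal setup applies to $A_1$ and other Du Val points, where local Lagrangian fibrations genuinely exist, so any contradiction must come from input specific to $2I$ --- and by the previous paragraph your only such input is unavailable exactly when it is needed. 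This is also essentially the route Ou's paper already pursues; it is what reduces the global problem to one residual $E_8$ case but does not eliminate it, which is precisely why the present paper replaces the numerical game by the skeleton/dual-complex argument and the non-existence of free $2I$-actions on the surfaces of Lemma \ref{lem:classfibre}. Without an actual computation (and a treatment of the strict transform of $\Delta$ and of $M\cdot E_j$), the last step is an assertion, not a proof.
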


In general, the normal surface $B$ is known to be $\QQ$-factorial with at most log-terminal
singularities \cite{MatsFibre}. Thus, locally analytically,
$B$ is isomorphic to a quotient of the form $\CC^2/G$ for some finite subgroup $G\subset{\rm GL}(2,\CC)$ (not containing quasi-reflections), cf.\ \cite[Ch.\ 4.6]{Matsuki}. 
As the quotient by the binary icosahedral group is the only factorial quotient singularity \cite{Mumford}, Theorem \ref{thm:main2}
can be expressed equivalently by saying that all singular points of $B$ are non-factorial. Note, once the morphism
is known to be flat, the base is automatically smooth.
In fact, by miracle flatness, smoothness of $B$ is equivalent to flatness of the morphism.

The second theorem implies the first one. Indeed, according to \cite{Ou}, for $X$ projective either $B\cong\PP^2$ or $B$
is a specific Fano surface with exactly one singular point, which, moreover, is an $E_8$-singularity. Note that in the local situation
non-factorial singularities do occur, which makes the proof of the conjecture in dimension four an interesting mix of global and local arguments.

The proof of Theorem \ref{thm:main2} makes use of results of Halle--Nicaise \cite{HaNi}, building upon work of Berkovich
\cite{Berko}, and results of Nicaise--Xu \cite{NiXu}.
The former classifies the essential skeleton of semiabelian degenerations of abelian surfaces, which can be seen as a variant of the results
of Kulikov and Persson, cf.\ \cite{FM}. The latter allows one to identify the essential skeleton and the dual complex of the degeneration. Another crucial input for our arguments
is  Alexeev's work \cite{Alexeev}.

\smallskip

\noindent
{\bf Acknowledgement:}  We are grateful to K.\ Hulek, J.\ Nicaise, W.\ Ou, and I.\ Smith for answering
our questions. Special thanks V.\ Alexeev for explanations concerning \cite{Alexeev} and to Ch.\ Lehn, M.\ Mauri, and W.\ Ou for helpful
comments on the first version of this paper. The first author wishes to thank D.\ Greb, V.\ Lazic, and L.\ Tasin for discussions of the problem at an early stage.
The second author thanks the Institut Henri Poincar\'e and the Institut de Math\'ematiques de Jussieu for their hospitality during his stay in 2018. The first version of the paper was written while the authors enjoyed the hospitality of the MSRI, which is gratefully acknowledged.

While working on this project, F.\ Bogomolov and N.\ Kurnosov published \cite{Bogo} which uses results by J.-M.\ Hwang and K.\ Oguiso  \cite{HwOg} to exclude
the case of $E_8$-singularities.  We are grateful to F.\ Bogomolov for explanations concerning their arguments.
\section{Actions of the binary icosahedral group}
Let $G$ be the binary icosahedral group, which by definition fits into a non-split, short exact sequence
$1\to \{\pm1\}\to G\to A_5\to 1$ with $A_5$ the group of alternating permutations of five letters.
We exploit the properties of $G$ to study its actions in two settings: First, we let $G$ act as a group of homeomorphisms 
on a low-dimensional topological manifold. Later this arises as a $G$-action on the  quotient of the essential skeleton of an abelian surface. Second, we study $G$-actions on complex varieties that are dominated by semiabelian surfaces.

\subsection{}\label{sec:topact}
We begin with the topological setting. Consider a finite extension $1\to\Gamma_0\to\Gamma\to G\to 1$.

\begin{prop}\label{prop:action}
Assume $\Gamma$ acts by homeomorphisms on a topological manifold $S$, which is either 
$S^1\times S^1$, $S^1$, or a point.
Then the induced action of $G$ on $S/\Gamma_0$ is trivial.
\end{prop}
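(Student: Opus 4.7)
My plan is to pit the fact that $G$ is perfect against structural restrictions on finite subgroups of $\mathrm{Homeo}(S)$ for the three manifolds in question. In every case the image of $\Gamma$ in $\mathrm{Homeo}(S)$ turns out to be solvable, and the induced $G$-action on $S/\Gamma_0$ will thus factor through a group that is simultaneously perfect and solvable, hence trivial.

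I would first record that $G$ is perfect, i.e.\ $[G,G]=G$: since $A_5=G/\{\pm1\}$ is simple and nonabelian, the abelianisation $G^{\mathrm{ab}}$ is a quotient of $\{\pm1\}$, and $G^{\mathrm{ab}}=\ZZ/2$ would force either a subgroup of index two in $A_5$ (contradicting simplicity) or a splitting of the given extension (contradicting its non-split nature); thus $G^{\mathrm{ab}}=1$. In parallel I would collect the needed solvability statements: for $S$ a point there is nothing to do; for $S=S^1$ every finite subgroup of $\mathrm{Homeo}(S^1)$ is classically conjugate to a cyclic or dihedral subgroup of $\mathrm{O}(2)$; and for $S=S^1\times S^1$ the analogous topological classification of finite group actions on the torus shows that every finite subgroup of $\mathrm{Homeo}(T^2)$ is conjugate to a subgroup of the affine group $T^2\rtimes\mathrm{GL}(2,\ZZ)$, which is solvable because the finite subgroups of $\mathrm{GL}(2,\ZZ)$ are cyclic or dihedral of order at most $12$.

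With these two ingredients the conclusion follows formally. Let $H\subset\mathrm{Homeo}(S)$ denote the image of $\Gamma$ and $H_0\subset H$ the image of $\Gamma_0$; since $\Gamma_0$ is normal in $\Gamma$, $H_0$ is normal in $H$. The induced action of $G=\Gamma/\Gamma_0$ on $S/\Gamma_0=S/H_0$ then factors as $G\twoheadrightarrow H/H_0\to\mathrm{Homeo}(S/\Gamma_0)$. The group $H/H_0$ is a quotient of the perfect group $G$, hence perfect, and simultaneously a quotient of the solvable group $H$, hence solvable; being both perfect and solvable, it is trivial. Therefore the $G$-action on $S/\Gamma_0$ is trivial.

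The main obstacle is the solvability statement for finite subgroups of $\mathrm{Homeo}(T^2)$, which rests on the topological classification of finite group actions on a closed surface of genus one. If one prefers to avoid invoking this classification wholesale, one can argue more directly by examining the representation of $H$ on $H_1(T^2;\ZZ)\cong\ZZ^2$---whose image automatically lands in a solvable finite subgroup of $\mathrm{GL}(2,\ZZ)$---and then showing, e.g.\ by equivariant smoothing together with the averaging of an invariant flat metric, that the kernel consisting of finite-order homeomorphisms isotopic to the identity is itself solvable.
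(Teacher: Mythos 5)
Your proof is correct, and it follows the same overall skeleton as the paper's (constrain the finite image of $\Gamma$ in ${\rm Homeo}(S)$ using the geometrization of finite surface group actions, then play this off against the group theory of $G$), but the group-theoretic endgame is genuinely different and cleaner. The paper argues that a nontrivial image of $G$ must surject onto $A_5$ (since $\{\pm1\}$ is the only proper nontrivial normal subgroup), shows via Edmonds that the orientation-preserving part of ${\rm Im}(\rho_\Gamma)$ has an abelian subgroup of index at most six, and then checks by hand that $A_5$ has no such subgroup, with a separate index-two reduction for orientation-reversing elements and a separate gluing argument when $S^1/\Gamma_0$ is an interval. Your ``perfect versus solvable'' formulation absorbs all of these case distinctions at once: the surjection $G\twoheadrightarrow H/H_0$ forces $H/H_0$ to be both perfect and solvable, hence trivial, and this works uniformly whatever the homeomorphism type of $S/\Gamma_0$ is (torus quotients that are spheres or orbifolds, the interval case for $S^1$, orientation-reversing elements). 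Two small remarks. First, your appeal to the affine classification of \emph{all} finite subgroups of ${\rm Homeo}(S^1\times S^1)$ is slightly stronger than the reference the paper uses (Edmonds covers orientation-preserving actions); but this costs you nothing, since solvability is closed under extensions and the orientation-preserving subgroup has index at most two, so the weaker statement already suffices for your argument. Second, your verification that $G$ is perfect is correct, though it is worth noting the paper never needs perfectness in this exact form --- it gets by with ``no nontrivial cyclic quotients'' for the circle case and the index computation in $A_5$ for the torus case; your single lemma replaces both.
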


\begin{proof} There is nothing to prove in the last case.
To deal with the case $S\cong S^1\times S^1$, we first observe that if the action of $G$ is not trivial, then the image of 
 $\rho_G\colon G\to {\rm Homeo}(S/\Gamma_0)$ is either $A_5$ or $G$ and, in any case, admits a surjection onto $A_5$.
To see  this, recall  that $\{\pm1\}$ is the only non-trivial normal subgroup of $G$ and that $A_5$ is simple.
 
 We will also use the fact that for any finite subgroup $H\subset {\rm Homeo}(S)$ of orientation-preserving homeomorphisms of a compact 
 real surface $S$,
there exists a complex structure on $S$ with respect to which $H$ is a group of biholomorphic automorphisms \cite[pp.\ 340--341]{Edmonds}.
 In our situation, the complex structure defines a complex curve
  $E\cong\CC/\Lambda$ of genus one. Now, the group of biholomorphic automorphisms of $E$ is a
  semi-direct product of the abelian group $E$ acting by translations
and the group of automorphisms of $E$ as an elliptic curve.  The latter is a subgroup of ${\rm SL}(2,\ZZ)$, which only contains finite subgroups of order at most six.  Hence, $H$ contains an abelian subgroup of index at most six. 

Let us apply this to the image of the given action $\rho_\Gamma\colon \Gamma\to{\rm Homeo}(S)$. Thus, if ${\rm Im}(\rho_\Gamma)$
 contains only orientation-preserving homeo\-morphisms, it contains an abelian subgroup of index at most six. The same then holds for  its image
under the surjection ${\rm Im}(\rho_\Gamma)\twoheadrightarrow{\rm Im}(\rho_G)\twoheadrightarrow A_5$. However, the only non-trivial subgroups
of $A_5$  of index at most six are isomorphic to $A_4$ or $D_{10}$, which are both not abelian.
If ${\rm Im}(\rho_\Gamma)\subset{\rm Homeo}(S)$ does not only contain orientation-preserving homeomorphisms, then the above applies to a certain index two subgroup 
 ${\rm Im}(\rho_\Gamma)'\subset{\rm Im}(\rho_\Gamma)$.  However, as $A_5$ does not contain any subgroup of index two,
 the image of ${\rm Im}(\rho_\Gamma)'$ under ${\rm Im}(\rho_\Gamma)\twoheadrightarrow{\rm Im}(\rho_G)\twoheadrightarrow A_5$ is still $A_5$
 and the arguments above yield again a contradiction.
 
 In the second case, the quotient $S/\Gamma_0\cong S^1/\Gamma_0$ is either
$S^1$ or the closed interval $[0,1]$. Recall that any finite subgroup of the group  ${\rm Homeo}(S^1)$
of homeomorphisms of the circle is either cyclic or dyhedral, cf.\ \cite[Sec.\ 4]{Ghys}. In the case that $S/\Gamma_0$ is again a circle, we apply this to the image of
$\rho\colon G\to {\rm Homeo}(S^1)$.  However, as the binary icosahedral group does not admit any
non-trivial homomorphism onto a cyclic group, the image has to be trivial. Hence, any action of $G$ on $S^1$ is actually trivial.
If $G$ acts by homeomorphisms on the interval $[0,1]$, then it leaves invariant the boundary and hence, by gluing the boundary points, acts by homeomorphisms on $S^1$ as well. However, as before, on $S^1$ the action must be trivial and, hence, $G$ acts trivially on the open interval, which
suffices to conclude.
\end{proof}
\subsection{}\label{sec:geomact} Let us now turn to actions of $G$ on low-dimensional varieties.

\begin{prop}\label{prop:actiongeom}
Let $T$ be a variety of dimension at most two that is rationally dominated by a surface  that is either
abelian, rational, or of the form $\PP^1\times E$ with $E$ an elliptic curve. Then, the binary icosahedral group $G$
does not act freely on $T$.
\end{prop}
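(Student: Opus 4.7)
The plan is to treat the cases $\dim T = 0, 1, 2$ separately, using at each stage the twin facts that $G$ is perfect (so admits no nontrivial abelian quotient) and that its smallest faithful representation is the two-dimensional one into $\mathrm{SL}(2,\CC)$. The case $\dim T = 0$ is trivial. For $\dim T = 1$, the irregularity bound $g(\tilde T) \le q(S) \le 2$ (for a smooth projective model $\tilde T$ of $T$) combined with the Hurwitz bound $|\Aut(\tilde T)| \le 84 < 120 = |G|$ in the case $g(\tilde T) = 2$ leaves only $\tilde T \cong \PP^1$ or $\tilde T$ an elliptic curve. A homomorphism $G \to \Aut(\PP^1) = \mathrm{PGL}_2(\CC)$ cannot be injective since $G$ has nontrivial center while $\mathrm{PGL}_2$ does not; for an elliptic curve $E$, perfectness forces $G \to \Aut(E) = E \rtimes F$ (with $F$ cyclic) to land in the translation part $E$ and then to be trivial. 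In either case no faithful, hence no free, $G$-action exists.

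For $\dim T = 2$, after a $G$-equivariant resolution and compactification I may assume $T$ is a smooth projective surface, still rationally dominated by $S$ and still carrying a free $G$-action. Since $\kappa(T) \le \kappa(S) \le 0$, the Enriques--Kodaira classification leaves six possibilities for the minimal model of $T$: rational, ruled over an elliptic curve, K3, Enriques, abelian, or bielliptic. The rational, K3, and Enriques cases fall uniformly to the holomorphic Euler characteristic: $\chi(\ko_T)$ equals $1, 2, 1$ respectively, while a free $G$-action makes $T \to T/G$ étale of degree $|G| = 120$ and forces $|G| \mid \chi(\ko_T)$, contradiction. For $T$ ruled over an elliptic curve $C$ the unique ruling $T \to C$ is $G$-equivariant; the one-dimensional argument then forces $G$ to act trivially on $C$, so $G$ preserves every $\PP^1$-fiber, on which Lefschetz produces a fixed point for every element of $G$, contradicting freeness. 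The bielliptic case runs in parallel via the Albanese fibration $T \to \mathrm{Alb}(T)$ onto an elliptic curve: $G$ acts trivially on the base, preserves every smooth elliptic fiber, acts trivially there by the same perfectness argument, and therefore acts trivially on $T$.

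The abelian case is the most delicate. Using the splitting $\Aut(T) = T \rtimes \Aut(T, 0)$ and the fact that $\Aut(T, 0)$ is finite and embeds into $\mathrm{GL}(T_0 T) = \mathrm{GL}(2, \CC)$, the image of $G$ in $\Aut(T, 0)$ is a perfect finite subgroup of $\mathrm{GL}(2, \CC)$. By the ADE classification of finite subgroups of $\mathrm{SL}(2, \CC)$ and inspection of their abelianizations, the only perfect possibilities are the trivial group and $G$ itself. If the image is trivial, $G$ acts by translations, and perfectness-to-abelian then forces a trivial action, contradicting non-triviality. Otherwise $G$ embeds via its defining representation in $\mathrm{SU}(2)$, in which no non-identity element has $1$ as an eigenvalue; writing any $g \in G$ as $t_{b_g} \circ \psi_g$, the endomorphism $1 - \psi_g$ is then an isogeny of $T$, so $b_g \in (1 - \psi_g)(T)$ and $g$ has a fixed point, again contradicting freeness.

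The main obstacle will be this last subcase: $\chi(\ko_T) = 0$ for abelian $T$ offers no arithmetic obstruction, and $G$ genuinely admits lattice-preserving representations on $\CC^2$ (via the icosian lattice), so no purely structural obstruction to a $G$-action on some abelian surface exists. The extra input that breaks the tie is the eigenvalue structure of $G \subset \mathrm{SU}(2)$: every non-identity element has eigenvalues $e^{\pm i \theta}$ with $\theta \ne 0$, hence acts on $T_0 T$ without fixing any nonzero tangent direction.
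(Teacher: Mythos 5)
Your overall architecture matches the paper's: reduce by dimension, then run through the Enriques--Kodaira classes of (the minimal model of) $T$, which is legitimate since $\kappa(T)\le\kappa(S)\le 0$, and kill each class using the perfectness of $G$. Several sub-cases are handled by genuinely different arguments. For rational $T$ you use $120\nmid\chi(\ko_T)=1$ where the paper uses simple connectedness of the smooth rational quotient; both work, and yours unifies the rational, K3 and Enriques cases. Most notably, in the abelian case the paper shows the linear part of the action is trivial by combining triviality of the action on $H^{2,0}$ with Fujiki's restriction (special automorphisms of a $2$-torus have order $1,2,3,4,6$, incompatible with the order-$5$ elements of $G$), whereas you use the ADE classification of finite subgroups of ${\rm SL}(2,\CC)$ to pin the linear part down to $1$ or $G$ itself and, in the latter case, the ${\rm SU}(2)$ eigenvalue argument to produce a fixed point for every $g\ne e$ directly. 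Your bielliptic argument via the Albanese fibration is also simpler than the paper's lift to the covering torus. These alternatives are correct and essentially self-contained.

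Two points need repair. First, the claim that $G\to{\rm PGL}_2(\CC)$ cannot be injective ``because $G$ has nontrivial center while ${\rm PGL}_2$ does not'' is a non-sequitur: a subgroup of a centerless group can perfectly well have nontrivial center ($\ZZ/2\subset S_3$). The conclusion you need is anyway much weaker and immediate: every automorphism of $\PP^1$ has a fixed point, so no nontrivial group acts freely on $\PP^1$; the same remark is what you should invoke for the fibres in the ruled case. Second, and more substantively, your abelian-case argument is carried out on $T$ as though it were literally an abelian surface (the splitting $\Aut(T)=T\rtimes\Aut(T,0)$ only makes sense then), but your reduction only makes $T$ smooth and projective, i.e.\ a blow-up of its minimal model $T_0$. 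You must either descend the free action to $T_0$ and check it remains free --- this is exactly the step the paper supplies: a nontrivial stabilizer of $x\in T_0$ would act freely on the exceptional $\PP^1$ over $x$ blown down last, which is impossible --- or argue fibrewise along $T\to{\rm Alb}(T)$, finding a fixed point of $g$ in the fibre over a fixed point downstairs. (A smaller omission of the same flavour: you should justify that the base of the ruling in the non-rational ruled case has genus $\le 1$, e.g.\ because none of the three surfaces $S$ admits a dominant rational map onto a curve of genus $\ge 2$.) All of these are routine to fill, and the proof is sound once they are.
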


\begin{proof}
If $T$ is a curve, its normalization is of genus at most one. 
However, $\PP^1$ does not admit any non-trivial, free group action and, according to Proposition \ref{prop:action},
$G$ does not act (freely) on any elliptic curve. Hence, $G$ does not act freely on $T$.

Let now $T$ be a surface. 
Then, by universality of the minimal resolution, 
the action of $G$ on $T$ lifts to an action of $G$ on its minimal resolution $\tilde T\to T$. Moreover, the action on $\tilde T$ is still free.
Now pick a $G$-equivariant minimal model $\tilde T\to T_0$. Note that $T_0$ is indeed minimal unless, possibly, when it has negative Kodaira dimension.
Suppose the induced $G$-action on $T_0$ has a non-trivial stabilizer $G_x$ at some point $x\in T_0$. Then $G_x$
leaves invariant the exceptional curve in $\tilde T$ over $x$ that is blown-down last. However, this exceptional curve is a $\PP^1$, which does not admit any non-trivial, free group action. Therefore, also the $G$-action on $T_0$ must be free.

If $T$ is a rational surface with a non-trivial, free $G$-action, then the quotient of the free $G$-action on the smooth
rational surface $T_0$ is again a smooth rational surface,  contradicting the fact that such a surface is simply connected.  
 Assume now that the equivariant minimal model $T_0$ of $T$ is a blow-up of a ruled surface over a curve $E_0\coloneqq
{\rm Alb}(T_0)$ of genus one.
Then the action of $G$ on $T_0$ covers an action of $G$ on the base $E_0$ of its ruling. However, again using 
Proposition \ref{prop:action}, $G$ does not admit any non-trivial action on the elliptic curve $E_0$. Hence, $G$ acts on the fibres of the ruling
$T_0\to E_0$, but as before  there are no non-trivial free group actions on $\PP^1$.

Assume now that the minimal model $T_0$ of $T$  is isomorphic, as a complex manifold, to an abelian surface.
It suffices to show that the induced action of $G$ on $T_0$ is trivial. The $G$-action on $T_0$ naturally induces
an action on the Albanese variety ${\rm Alb}(T_0)$, which now is an abelian surface non-canonically isomorphic to $T_0$.
 As $\{\pm 1\}$ is the only non-trivial normal subgroup of $G$, the image of
 $\tau\colon G\to {\rm Aut}({\rm Alb}(T_0))$ is either $G$, $A_5$, or trivial.
 Furthermore, the action on the one-dimensional
$H^{2,0}(T_0)$ is trivial, for $G$ does not admit any non-trivial cyclic quotients. Hence, the action on $H^{1,0}(T_0)$ is special.
Then, according to \cite[Lem.\ 3.3]{Fuji}, the elements of ${\rm Im}(\tau)$ have order $1,2,3,4$, or $6$,
but both groups, $A_5$ and $G$, contain  elements of order five. See also \cite[Cor.\ 3.17]{Katsura}. Hence, $\tau$ is trivial and, therefore, 
the action of $G$ on $T_0$ factors through the abelian group of translations. However, $G$ has only trivial abelian quotients.

Let us next consider the case that $T_0$ is a K3  or an Enriques surface. Now use that $\chi(\ko_{T_0})=1$ or $2$, to show
that $T_0$ does not admit any free group action of any group of order $>2$.

Lastly, assume $T_0$ is a bielliptic surface $(E_1\times E_2)/G_0$ and consider 
the action of the natural extension  $1\to G_0\to\tilde G\to G\to 1$ on $E_1\times E_2$. As above, if this action on $H^{1,0}(E_1\times E_2)$ is
special, then $\tilde G$ acts by translations (note that $\tilde G$ again contains an 
element of order five). Hence, the action of $G$ on $T_0$ factors through an abelian group and, therefore, is trivial.
If the action is not special, then replace $\tilde G$ by the kernel of the induced surjection $\tilde G\twoheadrightarrow\ZZ/n\ZZ$,
which still surjects onto $G$, and conclude as before.
\end{proof}

\section{Semiabelian degenerations}

The generic fibre $X_t$ of the Lagrangian fibration $X\twoheadrightarrow B$  is a smooth variety isomorphic to an abelian surface. Indeed,
its cotangent bundle is isomorphic to its normal bundle and, therefore, trivial. Thus, the fibration 
 can be viewed as a degeneration of  abelian surfaces
to more singular fibres. Very little can be said about arbitrary degenerations of abelian surfaces, so we will have to
construct a new family  that has only mildly singular fibres and, in order to exploit the properties of the binary icosahedral group $G$,
that comes with a  $G$-action.
However, only one-dimensional degenerations of abelian surfaces can be studied by means of essential skeleta and dual complexes.
So, in a second step, we will pass from the two-dimensional family of complex abelian surfaces to a one-dimensional family of abelian surface over a
function field.

\subsection{} Assume $B$ is \'etale locally at $0\in B$ of the form $V\coloneqq U/G$ with $G\subset {\rm GL}(2,\CC)$
an arbitrary finite group acting on a smooth surface $U$ with a unique fixed point $0\in U$ and such that the action is free on $U\setminus\{0\}$. To prove the theorem, we
may actually reduce to the case $B=V=U/G$, which we will henceforth assume.

The action of $G$ on $U$ naturally lifts to an action on the fibre product $X_U\coloneqq U\times_VX$ and, by functoriality, to an action on
its normalization  $Y\to X_U$. Note that, by construction, $Y|_{U\setminus\{0\}}\cong X_{U\setminus\{0\}}$ and, due to the following lemma, $Y$ is in fact smooth.

\begin{lem}
The composition $\varphi\colon Y\to X_U\to X$ is \'etale and can be identified with the quotient of the action of $G$ on $Y$, which is free:
$$\varphi\colon Y\twoheadrightarrow Y/G\cong X.$$
\end{lem}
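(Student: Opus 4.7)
I would study $\varphi$ first over the open $X|_{V\setminus\{0\}}\subset X$, where the $G$-action on $U$ is free and everything can be handled by base change, and then push the conclusions across the fibre $X_0$ over $0\in V$ by means of Zariski--Nagata purity of the branch locus. Over $V\setminus\{0\}$ the action of $G$ on $U\setminus\{0\}$ is free by hypothesis, so $U\setminus\{0\}\to V\setminus\{0\}$ is a finite \'etale Galois cover with group $G$. Base-changing along $X|_{V\setminus\{0\}}\to V\setminus\{0\}$ therefore yields a finite \'etale Galois $G$-cover $X_{U\setminus\{0\}}\to X|_{V\setminus\{0\}}$ on which $G$ acts freely with quotient $X|_{V\setminus\{0\}}$. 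In particular $X_{U\setminus\{0\}}$ is smooth, hence already normal, and therefore coincides with $Y|_{U\setminus\{0\}}$; so $\varphi$ is \'etale with free $G$-action over this open locus.

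To extend these properties across $X_0$, I would apply purity of the branch locus. The morphism $\varphi\colon Y\to X$ is finite, being the composition of the finite normalization $Y\to X_U$ with the finite base change $X_U\to X$. Moreover $Y$ is normal by construction, $X$ is smooth, and the complement $X\setminus X|_{V\setminus\{0\}}=X_0$ has codimension two in $X$ because the Lagrangian fibration $X\to V$ is equidimensional of relative dimension two. The Zariski--Nagata theorem then forces the branch locus of $\varphi$, which is pure of codimension one, to be empty, so $\varphi$ is \'etale on all of $X$. In particular $Y$ is smooth.

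The $G$-action on $U$ over $V$ induces one on $X_U$ over $X$ which, by the universal property of normalization, lifts uniquely to an action on $Y$ commuting with $\varphi$. This gives a factorization $Y\to Y/G\to X$ of finite morphisms between normal varieties that is an isomorphism over $V\setminus\{0\}$, so by Zariski's main theorem $Y/G\cong X$. Freeness of the action is then automatic: every fibre of $\varphi$ is a $G$-set of cardinality $|G|$ by \'etaleness, and its $G$-quotient is a point since $Y/G\cong X$, forcing $G$ to act transitively with trivial stabilizer. I expect the only genuinely delicate input to be the codimension-two hypothesis required by purity, which rests on the equidimensionality of projective Lagrangian fibrations in this setting; once that is granted the remainder of the argument is formal.
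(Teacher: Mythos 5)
Your proposal is correct and follows essentially the same route as the paper: \'etaleness over $U\setminus\{0\}$ by base change, then purity of the branch locus combined with the codimension-two statement for the Lagrangian fibre $Y_0$ (Matsushita's equidimensionality) to conclude $\varphi$ is \'etale everywhere, and finally normality plus birationality to identify $Y/G$ with $X$. Your explicit derivation of freeness from the simply transitive $G$-action on the fibres of the \'etale quotient is only spelling out what the paper leaves implicit.
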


\begin{proof}
Indeed, as the action of $G$ is free on the complement of $0\in U$, the morphism $\varphi\colon Y\to X$ is \'etale on the complement of
the subscheme $Y_0\subset Y$, which as a Lagrangian subvariety has codimension two \cite{MatEqui}.  However, as $Y$ is normal and $X$ is smooth,  the ramification
locus is pure of codimension one and hence empty, i.e.\ $\varphi$ is \'etale. The induced morphism $Y/G\to X$ is
injective and birational and, using normality of both varieties, in fact an isomorphism.
\end{proof}
From the lemma one immediately deduces the following key fact.

\begin{cor}\label{cor:free}
The induced action $G\times Y_0\to Y_0$ on the central fibre $Y_0\subset Y$ is free.\qed
\end{cor}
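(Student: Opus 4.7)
The corollary is essentially an immediate restriction of the preceding lemma. Here is how I would organize the argument.

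First, I would observe that the $G$-action on $U$ fixes $0 \in U$, so the central fibre $Y_0 \subset Y$ (i.e.\ the preimage of $0$ under $Y \to X_U \to U$) is a $G$-invariant subscheme. Thus the $G$-action on $Y$ restricts to an action on $Y_0$, and the statement makes sense.

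Next, I would invoke the lemma directly: it asserts that the étale morphism $\varphi\colon Y \to X$ can be identified with the quotient map $Y \twoheadrightarrow Y/G$, and that this $G$-action on $Y$ is free. Since freeness of a group action is inherited by invariant subschemes, freeness on all of $Y$ immediately implies freeness on $Y_0$.

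There is really no obstacle; the only thing worth double-checking is the assertion that $Y_0$ is $G$-invariant, but this is forced by the fact that $G$ acts on $Y$ covering its action on $U$ (by construction of $Y$ as the normalization of $X_U = U \times_V X$), together with the fact that $0 \in U$ is fixed by $G$. So the proof would amount to: since $\varphi\colon Y \to X$ is étale and realizes $X$ as the quotient $Y/G$, the action of $G$ on $Y$ is free; as $Y_0$ is $G$-stable, the induced action on $Y_0$ is free. $\qed$
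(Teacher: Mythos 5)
Your argument is correct and is exactly the one the paper intends: the lemma already asserts that the $G$-action on $Y$ is free (being the deck transformation action of the \'etale quotient $Y\twoheadrightarrow Y/G\cong X$), and since $0\in U$ is a $G$-fixed point, $Y_0$ is $G$-stable, so freeness restricts. The paper states the corollary with an immediate \qed for precisely this reason; your write-up just makes the two implicit steps explicit.
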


Eventually, the free action on the fibre $Y_0$ will lead to a contradiction if the group $G$ is the binary icosahedral group.
This will be done in two steps:
\begin{enumerate}
\item[(i)] The $G$-action on $Y_0$ leaves invariant a certain subvariety $T$.
\item[(ii)] The subvariety $T$ does not admit a free $G$-action.
\end{enumerate}

\subsection{}\label{sec:constr1}
Information about  $Y_0$ together with its $G$-action is difficult to obtain. We will instead work with a functorial
extension of the smooth part of the family $Y\to U$ to a family of stable semiabelic pairs \cite{Alexeev}. The extension
has to be produced in a $G$-equivariant fashion.

In a first step, we  turn the smooth fibres $Y_t$ of $Y\to U$ into stable semiabelic pairs and then later  extend this smooth family to a family over the entire
 $U$.
To realize this first step,  we need to choose, in a uniform way, for each smooth fibre $Y_t$ an effective ample divisor $D_t\subset Y_t$  
and an origin $o(t)\in Y_t$ that makes $Y_t$  an abelian surface.
The first is easy to arrange, uniformly over $U$, by picking an ample divisor $D_X\subset X$. If necessary, shrink $V$ to ensure that $D_X$ does not contain any fibres,
so that its restriction to all fibres is a divisor. Its pull-back under $\varphi$ 
yields a $G$-invariant relative ample effective divisor $D\coloneqq\varphi^{-1}(D_X)\subset Y$.  To turn the smooth part of $Y\to U$ into a family of abelian surfaces,
which amounts to choosing uniformly a zero-section, we need to pass to an appropriate
Galois cover $U'\to U$ (shrink $U$ if necessary), e.g.\ the Galois closure of a multisection of $Y\to U$. Then the smooth part of
$Y'\coloneqq U'\times_UY\to U'$ together with the induced section $o\colon U'\to Y'$  constitutes a family of abelian surfaces and the pull-back of $D\subset Y$ is a
relative ample effective divisor $D'\subset Y'$. Furthermore, the $G$-action on $Y\to U$ lifts to an action of $G'$ on $Y'\to U'$,
where $G'$ is an extension $1\to H\to G'\to G\to 1$ of $G$ by the Galois group $H$ of the cover $U'\to U$.
We summarize the construction above by the following diagram
$$\xymatrix{D'\subset \ar@{}[r(0.6)]|>{}\!\!\!\!\!\!\!\!\!\!\!\!\!\!\!\!&\ar@(ur,ul)[]_{G'}Y'\ar[d]<-1pt>\ar[r]&\ar@(ur,ul)[]_{G}Y\ar[d]\ar[r]&X_U\ar[d]\ar[r]&X\ar[d]<-19pt>\cong Y/G\\
&\ar@(dr,dl)[]^{G'}U'\ar@/^1pc/[u]^o\ar[r]_{/H}&U\ar@{=}[r]&U\ar[r]_-{/G}&V\cong U/G.}$$

Note that when passing to $U'$, the origin $0\in U$ gets replaced by an orbit $\{0_1,\ldots,0_n\}$ of the action of $H$
or, equivalently, of the action of $G'$.

Let $\Delta\subset U'$ be the discriminant locus of $Y'\to U'$. Then the restriction of $Y'$, $D'$, and the section $o$
to its complement $U'\setminus \Delta$ describes a family of abelian surfaces together with an effective polarization of a certain
degree $d$. It corresponds to a morphism to the moduli stack $\ka\kp_{2,d}$ of such pairs,
which admits a coarse quasi-projective moduli space $AP_{2,d}$:
\begin{equation}\label{eqn:classmap}U'\setminus\Delta\to\ka\kp_{2,d}\to AP_{2,d}.
\end{equation}

According to \cite{Alexeev}, the main component $\overline{\ka\kp}_{2,d}$  of the moduli stack of stable semiabelic pairs admits a coarse moduli space
$\overline{AP}_{2,d}$, which is a proper algebraic space and, in fact, a projective variety \cite{KoPat}. Thus, $\overline{AP}_{2,d}$ provides
a compactification of $AP_{2,d}$.
As the surface $U'$ is normal, we may shrink $U'$ to an  open, invariant neighbourhood of the $H$-orbit $\{0_i\}$, such that
(\ref{eqn:classmap}) extends to a morphism 
\begin{equation}\label{eqn:classmap2}
U'\setminus\{0_i\}\to \overline{AP}_{2,d}.
\end{equation}
This determines a stable semiabelic pair for each point $t\in U'\setminus\{0_i\}$, but not quite a family yet.
In order to complete our family over $U'\setminus\Delta$ to a family of stable semiabelic varieties over $\Delta$ and, in particular,
over the points $0_i$,
 we need to modify $U'$.
We do this in two steps. Firstly, resolve the indeterminacies of the rational map
(\ref{eqn:classmap2}) by passing to a (multiple) blow-up $\xymatrix{U'&\ar[l]{\rm Bl}(U')\ar[r]&\overline{AP}_{2,d}\,,}$which can be done in a $G'$-equivariant manner
(with the trivial action on $\overline{AP}_{2,d}$). However, a morphism to $\overline{AP}_{2,d}$ does still not provide us with a family. But there exists a  Galois cover $U''\to{\rm Bl}(U')$, with some Galois group $H'$, by an irreducible normal surface $U'$ (to simplify, you may shrink $U'$ further)
together with a lift $\tilde\phi$ of $\phi\colon{\rm Bl}(U')\to\overline{AP}_{2,d}$ to the stack $\overline{\ka\kp}_{2,d}$:
\vskip-0.5cm
$$\xymatrix{\ar@(ur,ul)[]_{G''}U''\ar[r]_-{/H'}\ar[d]_--{\tilde\phi}&\ar@(ur,ul)[]_{G'}{\rm Bl}(U')\ar[dr]^{\phi\phantom{GHGH}}\ar[r]^-\sigma&\ar@(ur,ul)[]_{G'}U'\ar@{-->}[d]&\ar@{_(->}[l]U'\setminus\{0_i\}\ar[d]\\
\overline{\ka\kp}_{2,d}\ar[rr]&&\overline{AP}_{2,d}&\ar@{_(->}[l]AP_{2,d}.}$$
Here, $G''$ is the natural extension  $1\to H'\to G''\to G'\to 1$ of $G'$ by the Galois group $H'$. Alternatively, $G''$ can be seen as a finite
extension 
\begin{equation}\label{eqn:DefK}
1\to K\to G''\to G\to 1
\end{equation} of the original $G$ by a finite group $K$, which in turn  is an extension of $H$ by $H'$.
Again, $\tilde\phi$ is $G''$-equivariant, this time with respect to a non-trivial action on the target.

Pulling-back the universal family over $\overline{\ka\kp}_{2,d}$ via $\tilde\phi$ to $U''$ yields a
family $(\ka,\kb)\to U''$ of stable semiabelic varieties. Here, $\ka\to U''$ is a semiabelian scheme with an
action on the projective family $\kb\to U''$. We are suppressing  the
effective relative ample divisor, which is of no relevance to us. The existence of the semiabelian extension $\ka$ is due to Grothendieck and
Mumford \cite[Exp.\ IX, Prop.\ 3.5]{Groth} and Faltings--Chai \cite[Ch.\ 6]{FC}, cf.\ \cite{CM}.

 Note that over the pre-image under
$U''\to U'$ of the open subset $U'\setminus\Delta$ the two families $\ka$ and $\kb$ coincide with the pull-back 
$Y''\coloneqq U''\times_{U'}Y'$ of $Y'\to U'$. Clearly, the action of $G'$ on $Y'$ lifts to an action of $G''$ on $Y''$, which in turn
yields an action of $G''$ on the restriction of $(\ka,\kb)$ to the pre-image of $U'\setminus \Delta'$.
In fact, as $\overline{\ka\kp}_{2,d}$ is separated, the action of $G''$ extend to all of $(\ka,\kb)\to U''$ over the
given action on $U''$.

\subsection{}\label{sec:invcurve} 
Consider the equivariant birational morphism $\sigma\colon{\rm Bl}(U')\to U'$ with respect to the action 
of the stabilizer $G'_1\coloneqq {\rm Stab}_{G'}(0_1)$ of the point $0_1\in U'$.
Then, running the equivariant MMP for surfaces, one always finds
a closed point $t_1\in \sigma^{-1}(0_1)$ fixed under the $G'_1$-action or an irreducible $G'_1$-invariant curve $C_1\subset\sigma^{-1}(0_1)$. 
This can also be seen as a special case of a more general result due to Hogadi--Xu \cite[Thm.\ 1.3]{HoXu}, cf.\  \cite[Prop.\ 3.6]{LiuXu}.
In the case of a fixed point $t_1$, we blow-up once more to reduce to the case of a
$G'_1$-invariant exceptional curve $C_1$.
 Note, for later reference, that $G_1'$ is a finite extension $1\to K_1\to G_1'\to G\to 1$, i.e.\ the projection induces a surjection
$G_1'\twoheadrightarrow G$. Note that in general the inclusion $K_1\subset H$ is proper.

Next, we pick  an irreducible curve $C\subset U''$ dominating $C_1$ with its scheme-theoretic generic point $\eta_C\in C$
and consider the subgroup $H'_C\subset H'$ leaving it invariant. Then there exists a subgroup $G''_C\subset G''$
given by an extension $1\to H'_C\to G''_C\to G'_1\to 1$, whose induced action  on $(\ka,\kb)\to U''$ leaves $C\subset U''$ invariant.

Consider now a morphism ${\rm Spec}(R)\to U''$ from the spectrum of a DVR $R$ with its closed point $0_R$ mapped to $\eta_C$ and its generic point $\eta_R$ mapped to the generic point $\eta_{U''}$ of $U''$. We assume that
both extensions $k(\eta_C)\subset k(0_R)$ and $k(\eta_{U''})\subset k(\eta_R)$ are finite and Galois.
Geometrically, up to finite cover, we think of ${\rm Spec}(R)$ as a curve in $U''$ intersecting the curve $C$ in its generic point.

The pull-back of $(\ka,\kb)$ to ${\rm Spec}(R)$ shall be denoted
by $(\ka_R,\kb_R)$. Then, $\ka_R\to{\rm Spec}(R)$ describes a degeneration of the
abelian surface $A\coloneqq\ka_{\eta_R}$ to the semiabelian variety $A_0\coloneqq\ka_{0_R}$. 
After passing to a further finite extension of $R$, we may assume that the irreducible components
of $\kb_{0_R}$ are geometrically irreducible. For later use, we state the following observation.

\begin{lem}\label{lem:classfibre}
Let $W_0\subset\kb_{0_R}$ be an irreducible component of the closed fibre of
$\kb_R\to{\rm Spec}(R)$. Then the $k(0_R)$-variety $W_0$ is either an abelian surface, or birational to $\PP^1\times E$ with $E$ an elliptic curve, or a rational surface.
\end{lem}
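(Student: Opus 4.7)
The plan is to read off the structure of $W_0$ from Alexeev's classification of stable semiabelic pairs \cite{Alexeev}. Since $(\ka_R,\kb_R)\to\Spec(R)$ is pulled back from the universal family over $\overline{\ka\kp}_{2,d}$, the closed fibre $\kb_{0_R}$ is a stable semiabelic variety acted upon by the semiabelian surface $A_0\coloneqq\ka_{0_R}$, and every irreducible component $W_0$ is the closure of a single $A_0$-orbit. Flatness of $\kb_R\to\Spec(R)$ together with the fact that the generic fibre is an abelian surface forces $W_0$ to have dimension two.

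By Chevalley's structure theorem for semiabelian varieties, $A_0$ fits into a short exact sequence
$$1\to T\to A_0\to E_0\to 1$$
with $T$ an algebraic torus of some rank $t\in\{0,1,2\}$ and $E_0$ an abelian variety of dimension $2-t$. The step I would invoke from Alexeev's classification is that the two-dimensional orbit closure $W_0$ is an equivariant compactification of an $A_0$-torsor whose $T$-orbits are compactified to projective toric varieties; equivalently, $W_0$ admits a natural projection onto a torsor $E_0'$ under an abelian quotient of $E_0$, whose fibres are $t$-dimensional projective toric varieties.

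The proof then concludes by case analysis on the toric rank $t$. If $t=0$, then $A_0$ is an abelian surface and $W_0$ is a torsor under it, hence an abelian surface. If $t=1$, then $E_0'$ is an elliptic curve $E$ and $W_0\to E$ is a $\PP^1$-fibration; since any $\PP^1$-fibration over a curve is birational to the product, $W_0$ is birational to $\PP^1\times E$. If $t=2$, then $E_0=0$ and $W_0$ is a two-dimensional projective toric variety, which is rational.

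The principal obstacle is a clean identification of the projection $W_0\to E_0'$ and the toric nature of its fibres; this requires a careful invocation of the explicit local models of stable semiabelic varieties in \cite[Ch.~5]{Alexeev}, where the orbit stratification and its fibration over the abelian part of the acting group are worked out. A secondary point is that one should note geometric irreducibility of the components (which the authors have already arranged by a finite extension of $R$) so that the birational descriptions in each case apply over $k(0_R)$. Once the structural input from Alexeev is in place, the trichotomy follows immediately from the three possible values of $t$.
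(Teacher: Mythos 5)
Your proposal is correct and follows essentially the same route as the paper: the authors likewise observe that $(\ka_{0_R},\kb_{0_R})$ is a stable semiabelic variety, so the semiabelian surface $\ka_{0_R}$ acts with finitely many orbits on $\kb_{0_R}$, and the trichotomy corresponds exactly to the three possible dimensions $0\leq d\leq 2$ of the toric part. Your write-up simply makes explicit the orbit-closure structure and the case analysis that the paper leaves implicit.
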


\begin{proof}
Indeed, the pair $(\ka_{0_R},\kb_{0_R})$ forms a stable semiabelic variety. In particular, the semiabelian surface $\ka_{0_R}$ acts with only finitely many orbits
on $\kb_{0_R}$. The three cases correspond to the three possibilities for the dimension $0\leq d\leq 2$ of the toric part of $\ka_{0_R}$.
\end{proof}

\subsection{}\label{sec:Actionext}

Note that the normal surface
$U''/K$ comes with a birational morphism $U''/K\to U$, where $K$ is given by (\ref{eqn:DefK}).
Also,
the two quotients $\ka/K$ and $\kb/K$ over $U''/K$ are birational to $Y\to U$. In fact, they coincide over the pre-image of the  complement of the discriminant locus
of the latter.

The action of the group $G''_C$ on $(\ka,\kb)$ lifts to an action of a group $\Gamma$
on $(\ka_R,\kb_R)\to{\rm Spec}(R)$. Here, $\Gamma$ is as an extension $1\to {\rm Gal}\to
\Gamma\to G''_C\to 1$ by the finite Galois group of the extension $k(\eta_{U''})\subset k(\eta_R)$. The action of $\Gamma$ respects the closed
fibre $\kb_{0_R}$. However, as it involves the action of  ${\rm Gal}$, it is not an action on the $k(0_R)$-variety $\kb_{0_R}$.
For later use, we remark that $\Gamma$ can also be regarded as a finite extension of $G$, say $$1\to\Gamma_0\to\Gamma\to G\to 1.$$
The situation is pictured by the following diagram
$$\xymatrix{{\rm Spec}(k(0_R))\ar@(ur,ul)[]_{\Gamma}\ar[r]\ar@{^(->}[d]&C\ar@{^(->}[d]\ar@(ur,ul)[]_{G''_C}\ar[r]_{/H_C'}&C_1\ar@{^(->}[d]\ar@(ur,ul)[]_{G'_1}&\\
{\Gamma}\curvearrowright{\rm Spec}(R)~~
Ê\ar[r]&U''\ar[r]_{/H'}&{\rm Bl}(U')\\
{\rm Spec}(k(\eta_R))\ar@{^(->}[u]\ar[r]&{\rm Spec}(k(\eta_{U''})).\ar@{^(->}[u]&}$$

Let $K_C\coloneqq K\cap G''_C$, which sits in an exact sequence $1\to K_C\to G''_C\to G\to 1$,  and let $\Gamma_C\subset\Gamma$ be its pre-image, for which
we have an exact sequence $1\to{\rm Gal}\to\Gamma_C\to K_C\to 1$.
Then, taking quotients of the semiabelic family $\kb$ yields inclusions
$$\kb_{0_R}/{\rm Gal}\subset \kb_C \text{ and }\kb_{0_R}/\Gamma_C\subset\kb_C/K_C\subset\kb/K\xymatrix{\ar@{<-->}[r]&} Y.$$
In particular, the closure of the image of an irreducible  $k(0_R)$-subvariety $W_0\subset\kb_{0_R}$ yields a $\CC$-subvariety $\overline W_0\subset\kb_C$ fibred over $C$.

\begin{remark}\label{rem:invW0}
Assume $W_0\subset\kb_{0_R}$ is invariant under a subgroup $\Gamma'\subset \Gamma$ which under the projection
$\Gamma\twoheadrightarrow G$  surjects onto $G$. Then $\overline W_0\subset \kb_C$  is $G''_C$-invariant  and $$W\coloneqq \overline W_0/K_C\subset \kb_C/K_C\subset\kb/K\xymatrix{\ar@{<-->}[r]&} Y$$ is  $G$-invariant.
 Also note that the varieties $W=\overline W_0/K_C\subset \kb_C/K_C$ are both fibred over the curve $C/K_C=C_1/K_1$, which is an exceptional curve of ${\rm Bl}(U')/H\to U$.
\end{remark}

\begin{remark}\label{rem:TWdom}
Note that if the $k(0_R)$-variety $W_0$ is of dimension two, i.e.\ an irreducible component of the special fibre
$\kb_{0_R}$, then $W=\overline W_0/K_C$ is of complex dimension three and, hence, a divisor in $\kb/K$. Therefore, $W$  rationally dominates
a subvariety $T$ of the closed fibre $Y_0$.

More concretely, as $C/K_C\subset U''/K$ is blown-down under the birational map
$U''/K\cong{\rm Bl}(U')/H \to U$ and $Y$ is a family over $U$, any fibre $W_t$ of $W\to C/K_C$ for $t$ in a dense open subset of $C/K_C$ rationally dominates  $T$. Now,  specializing the $k(0_R)$-variety $W_0$ to the complex variety $W_t$,
Lemma \ref{lem:classfibre} allows one to conclude that $T$ is dominated by a surface that is either abelian, rational, or isomorphic to $\PP^1\times E$ with $E$ an elliptic curve.
\end{remark}

\section{Action on the skeleton and the fibre}
In this final section, we apply the results of Section \ref{sec:topact}  to the $G$-action on the essential
skeleton of the one-dimensional degeneration over ${\rm Spec}(R)$ constructed above.
This will provide us with a $G$-invariant subvariety of the special fibre. Then, combining the results of Sections \ref{sec:geomact} 
and \ref{sec:constr1},  leads to a contradiction.

\subsection{}
The next result is a consequence of \cite[Cor.\ 4.3.3]{HaNi}, building upon \cite[Sect.\ 6.5]{Berko}, adapted to our situation.
As in the previous section, $A$ denotes the generic fibre $\ka_{\eta_R}$ of the semiabelian family
$\ka_R\to{\rm Spec}(R)$ over a DVR. By construction, the function field  $k(\eta_R)$ of $R$ is  a finite Galois extension
of the function field $k(\eta_{U''})$ of the complex surface $U''$. In the course of our discussion, we will tacitly replace $R$ by a suitable
finite extensions when necessary. Passing to the algebraic closure of $k(0_R)$ and the
completion of $R$, the base change of $A$ yields an abelian surface
over a complete discretely valued field with algebraically closed residue field of characteristic zero. In this situation, the Kontsevich--Soibelman
essential skeleton,  a subspace of the Berkovich space associated with the abelian surface, was studied by \ Musta\c t\u a and Nicaise \cite{MuNic}, see also  \cite{NiSur} for a survey. Suppressing the passage to the completion in the notation, we will write ${\rm Sk}(A)$ for the essential skeleton
of the base change of $A$. Note that ${\rm Sk}(A)$ can be computed in terms of any sncd model \cite[Thm.\ 4.5.5]{MuNic} and is, therefore, a finite CW complex. In particular,
it can in fact be computed over a finite extension of $R$.

\begin{prop}
The group $\Gamma$ acts continuously on the essential skeleton ${\rm Sk}(A)$ of the fibre $A= \ka_{\eta_R}$. The homeomorphism
type of ${\rm Sk}(A)$ is given by one of the three possibilities:
\begin{equation}\label{eqn:skelHN}
{\rm (i)~ }{\rm Sk}(A)\cong\{{\rm pt}\},~{\rm  (ii)~ }{\rm Sk}(A)\cong S^1,~{\rm  or~Ê (iii)~ }{\rm Sk}(A)\cong S^1\times S^1.
\end{equation}  
\end{prop}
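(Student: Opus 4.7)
The plan is to treat the two assertions separately, building on the construction of Section 2.3 and invoking \cite[Cor.\ 4.3.3]{HaNi}.

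For the action, recall that the $\Gamma$-action on $(\ka_R,\kb_R)\to{\rm Spec}(R)$ covers a finite action on ${\rm Spec}(R)$, whose restriction to the generic point corresponds to the Galois extension $k(\eta_{U''})\subset k(\eta_R)$. Consequently, after completing $R$ and base changing to the algebraic closure of $k(0_R)$, the group $\Gamma$ acts on the abelian variety $A=\ka_{\eta_R}$ over the resulting complete discretely valued field. By functoriality of Berkovich analytification, this produces a continuous $\Gamma$-action on $A^{\rm an}$. The essential skeleton ${\rm Sk}(A)\subset A^{\rm an}$ is intrinsically attached to $A$ (via, e.g., the weight function of Musta\c t\u a--Nicaise, or equivalently as the minimality locus of pluricanonical forms), so it is preserved set-wise under every automorphism of $A$ over the completed base. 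Restricting the continuous $\Gamma$-action on $A^{\rm an}$ therefore yields a continuous $\Gamma$-action on ${\rm Sk}(A)$.

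For the homeomorphism type, I would apply \cite[Cor.\ 4.3.3]{HaNi}, which is the incarnation of \cite[Sect.\ 6.5]{Berko} relevant in this setting: for an abelian variety $A$ of dimension $g$ over a complete discretely valued field with algebraically closed residue field of characteristic zero, the essential skeleton ${\rm Sk}(A)$ is canonically homeomorphic to $(S^1)^d$, where $d\in\{0,\ldots,g\}$ is the toric rank of the connected N\'eron model of $A$. In our setting, the semiabelian scheme $\ka_R\to{\rm Spec}(R)$ supplied by Grothendieck--Mumford and Faltings--Chai is, after a further finite base change that does not alter ${\rm Sk}(A)$, identified with the identity component of the N\'eron model of $A$, and its closed fibre $\ka_{0_R}$ is an extension of an abelian variety of dimension $2-d$ by a torus of dimension $d$. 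Since $g=2$, the toric rank $d$ takes the values $0,1,2$, producing precisely the three homeomorphism types listed in (\ref{eqn:skelHN}).

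The main, if modest, obstacle is matching our construction of $\ka_R$ (obtained by pulling back the universal family over $\overline{\ka\kp}_{2,d}$) with the hypotheses of \cite[Cor.\ 4.3.3]{HaNi}. What must be checked is that this semiabelian extension agrees, after the necessary finite base change, with the identity component of the N\'eron model of $A$; this follows from the uniqueness of semiabelian extensions of abelian schemes on the generic fibre, together with the invariance of the toric rank under finite base change. Once this is in place, the continuity of the $\Gamma$-action on ${\rm Sk}(A)$ and the exhaustiveness of (i)--(iii) are both immediate.
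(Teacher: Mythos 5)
Your argument for the homeomorphism type coincides with the paper's: both simply invoke \cite[Cor.\ 4.3.3]{HaNi} for an abelian surface over a complete discretely valued field with algebraically closed residue field of characteristic zero. (Your ``main obstacle''---matching $\ka_R$ with the identity component of the N\'eron model---is not actually needed for this statement, since the corollary applies to $A$ intrinsically and the toric rank of an abelian surface is automatically $0$, $1$, or $2$; the comparison only becomes relevant later when one relates the skeleton to the closed fibre.) For the continuity of the $\Gamma$-action, however, you take a genuinely different route. The paper computes ${\rm Sk}(A)$ as the dual complex of the closed fibre of a $\Gamma$-equivariantly constructed sncd model (enlarging $\Gamma$ if necessary) and observes that the action on the closed fibre, though not defined over $k(0_R)$, still acts continuously on the dual complex. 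You instead use the intrinsic characterization of ${\rm Sk}(A)$ inside the Berkovich space $A^{\rm an}$ via the weight function, so that any automorphism of $A$ compatible with the valuation preserves it; this avoids constructing an equivariant model at all. Both work, but one point in your write-up deserves more care: $\Gamma$ does \emph{not} act on $A$ ``over the complete discretely valued field''---the action is only semilinear over the Galois action on $k(\eta_R)$, exactly as the paper warns for the closed fibre. Since that Galois action preserves the valuation on $R$, the induced maps on $A^{\rm an}$ are still homeomorphisms permuting pluricanonical forms, so the minimality locus of the weight function is preserved and your argument goes through; but the semilinearity should be acknowledged explicitly rather than asserting an action over the field.
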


\begin{proof} The description of the homeomorphism type of ${\rm Sk}(A)$ is \cite[Cor.\ 4.3.3]{HaNi}. For the first assertion, 
we use again that ${\rm Sk}(A)$ can be computed in terms of an sncd model and of the dual complex of its closed fibre.
Such a model can always be equivariantly constructed, starting with $\kb_R$ and possibly enlarging the group $\Gamma$ further,
so that the group action on $A$ can be extended to the closed fibre. Although the group action on the closed fibre
does not respect the structure of the closed fibre as a variety over $k(0_R)$, it still acts continuously on its dual complex.
\end{proof}

Combined with Proposition \ref{prop:action}, one obtains the following result.

\begin{cor}\label{cor:trivactSk}
If $G$ is the binary icosahedral group, then the induced action on ${\rm Sk}(A)/\Gamma_0$ is trivial.\qed
\end{cor}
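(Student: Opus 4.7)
The plan is simply to invoke Proposition \ref{prop:action} with the data supplied by the preceding proposition. I need to verify three items: (i) that the acting group fits into an extension of the required form, (ii) that the space acted upon is one of the three topological manifolds listed in Proposition \ref{prop:action}, and (iii) that the action is by homeomorphisms rather than merely continuous.

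For (i), the extension $1\to \Gamma_0\to\Gamma\to G\to 1$ was constructed explicitly in Section \ref{sec:Actionext}, so under the hypothesis that $G$ is the binary icosahedral group this is immediate. For (ii), the preceding proposition shows that ${\rm Sk}(A)$ is homeomorphic to $S^1\times S^1$, $S^1$, or a point, which are precisely the three possibilities allowed by Proposition \ref{prop:action}. For (iii), since $\Gamma$ is finite, every continuous $\Gamma$-action on a Hausdorff space is automatically by homeomorphisms (each element has its inverse in the group acting as the continuous inverse map).

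Having checked these hypotheses, Proposition \ref{prop:action} applied with $S={\rm Sk}(A)$ and the given extension yields that $G$ acts trivially on ${\rm Sk}(A)/\Gamma_0$, which is the assertion of the corollary.

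Since this is a one-line deduction from the two immediately preceding results, there is really no obstacle; the only content is matching notation between the two statements. The substantive mathematical work has already been carried out in Proposition \ref{prop:action} (the topological input) and in the preceding proposition (the identification of the homeomorphism type of ${\rm Sk}(A)$ via \cite{HaNi} and the construction of an equivariant sncd model to see continuity of the $\Gamma$-action).
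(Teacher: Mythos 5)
Your proposal is correct and follows exactly the paper's own (implicit) argument: the paper states the corollary with a \qed and the single remark that it is obtained by combining the preceding proposition with Proposition~\ref{prop:action}, which is precisely the deduction you carry out. Your extra checks (that the extension $1\to\Gamma_0\to\Gamma\to G\to 1$ is the one from Section~\ref{sec:Actionext}, and that a continuous action of the finite group $\Gamma$ is automatically by homeomorphisms) are accurate and merely make explicit what the paper leaves to the reader.
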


We will also need the identification of the essential skeleton with the dual complex of an appropriate sncd model. The result is a consequence
of \cite{NiXu} and \cite{KNX}. Recall that the family $\kb_R\to{\rm Spec}(R)$, with generic fibre $A=\ka_{\eta_R}\cong
\kb_{\eta_R}$, was constructed in a way such that 
all irreducible components of the closed fibre $\kb_{0_R}$ are geometrically irreducible. However, $\kb_{0_R}\subset\kb_R$ is usually not
an snc divisor and its dual complex is not well defined. To remedy the situation, we first observe the following.

\begin{lem}
The family $\kb_R\to{\rm Spec}(R)$ satisfies the following conditions: $\kb_{0_R}$ is reduced, the canonical bundle
$K_{\kb_R}$ is trivial, and the pair $(\kb_R,\kb_{0_R})$ is log-canonical.
\end{lem}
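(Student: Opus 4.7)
The plan is to combine Alexeev's structure theorem for stable semiabelic pairs \cite{Alexeev} with the Grothendieck--Mumford--Faltings--Chai theorem that supplies the semiabelian group scheme $\ka_R\to\Spec(R)$, and with the fact that, after a suitable finite base change, a stable semiabelic variety is étale-locally a toric variety on which $\ka_R$ acts compatibly.

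First I would deduce reducedness of $\kb_{0_R}$ directly from Alexeev's definition. By construction $(\ka_R,\kb_R)$ is pulled back along the lift $\tilde\phi\colon U''\to\overline{\ka\kp}_{2,d}$ of the classifying map to the moduli stack of stable semiabelic pairs, and every geometric fibre of the universal family over $\overline{\ka\kp}_{2,d}$ is by definition a reduced projective scheme on which a semiabelian group acts with finitely many orbits.

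Next, for the triviality of $K_{\kb_R}$, since $R$ is a DVR one has $K_R\cong\ko_R$, so it suffices to trivialize the relative dualizing sheaf $\omega_{\kb_R/R}$. On the generic fibre this is clear because $\kb_{\eta_R}=A$ is an abelian surface. To extend the trivialization across $0_R$, I would transport the canonical nowhere-vanishing invariant $2$-form on the smooth group scheme $\ka_R$ to $\kb_R$ via the $\ka_R$-action, and verify in the local toric model that the resulting section of $\omega_{\kb_R/R}$ is everywhere regular and nowhere vanishing; this uses that the orbit stratification of $\kb_R$ under $\ka_R$ identifies each open stratum with a quotient of $\ka_R$ by a subgroup scheme of the toric part, along which the invariant form descends.

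Finally, for log-canonicity of $(\kb_R,\kb_{0_R})$ I would again invoke the local toric description. Étale-locally at any point of $\kb_{0_R}$, the total space $\kb_R$ is isomorphic to an affine toric variety $U_\sigma$ in such a way that the map to $\Spec(R)$ is given by a cocharacter and $\kb_{0_R}$ is cut out by the associated character, hence a reduced torus-invariant Cartier divisor $D\subset U_\sigma$. A standard discrepancy calculation on any smooth toric refinement of $\sigma$ then shows that the toric pair $(U_\sigma,D)$ is log-canonical, and this descends along the étale cover to $(\kb_R,\kb_{0_R})$. The main obstacle is producing the toric model of the total space, and not merely of its special fibre, compatibly with the map to $\Spec(R)$: for this one relies on Alexeev's explicit construction together with the fact that, after the finite extension of $R$ already built into the setup of Section~\ref{sec:constr1}, the toric part of $\ka_R$ is split, so that the degeneration acquires a genuine toric local model in the required sense.
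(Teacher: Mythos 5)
Your proposal is correct in substance, but it takes a genuinely different route from the paper on the key point, the log-canonicity of $(\kb_R,\kb_{0_R})$. The paper's proof is essentially a chain of citations: reducedness holds because $\kb_{0_R}$ is by definition a semiabelic variety (same as your first step); the triviality of $K_{\kb_R}$ and the fact that $\kb_{0_R}$ is Gorenstein with trivial canonical are read off from the explicit Mumford--Alexeev--Nakamura construction \cite[Thm.\ 5.7.1]{Alexeev}, \cite{Mumford}, \cite[Lem.\ 4.1\&4.2]{AlNa} (your argument transporting the invariant $2$-form along the $\ka_R$-action is a reasonable sketch of how those references prove it); but for log-canonicity the paper argues on the \emph{central fibre} rather than the total space: it quotes \cite[Lem.\ 3.7\&3.8]{Alexeev2} for semi-log-canonicity of $\kb_{0_R}$ and then applies inversion of adjunction \cite[Thm.\ 4.9(2)]{Kollar} to conclude that the pair $(\kb_R,\kb_{0_R})$ is log-canonical. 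Your direct toric discrepancy computation on the total space is a legitimate alternative, and it is more self-contained once the local model is granted; but note that it leans harder on the precise statement that the \emph{total space} over $R$ (not merely the fibre) is \'etale-locally toric compatibly with the map to $\Spec(R)$ -- which is exactly the content of the Mumford-type construction and is the step inversion of adjunction lets one bypass -- and that the discrepancy argument for a sub-boundary $D\leq\partial U_\sigma$ requires $K_{U_\sigma}+D$ to be $\QQ$-Cartier, which you should extract from the already-established triviality of $K_{\kb_R}$ together with $\kb_{0_R}$ being a Cartier fibre, rather than from toric generalities. With those two points made explicit, your argument closes.
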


\begin{proof}
By construction $\kb_{0_R}$ is a semiabelic variety and, therefore, by definition reduced. The other two assertions follow from the explicit construction of the degeneration, see \cite[Thm.\ 5.7.1]{Alexeev} and \cite{Mumford}. See also \cite[Lem.\ 4.1\&4.2]{AlNa} for the fact that $\kb_{0_R}$ is Gorenstein
and $K_{\kb_{0_R}}$ trivial and \cite[Lem.\ 3.7\&3.8]{Alexeev2} combined with \cite[Thm.\ 4.9(2)]{Kollar}
for the fact that $(\kb_R,\kb_{0_R})$ is log-canonical.
\end{proof}

Now we use the existence of a $\Gamma$-equivariant dlt modification $\pi\colon\tilde\kb\to\kb_R$ of the pair $(\kb_R,\kb_{0_R})$, cf. \cite[Thm.\ 1.34]{Kollar}.
In particular, $(\tilde\kb,(\tilde\kb_{0_R})_{\rm red}=\pi_*^{-1}\kb_{0_R}+E)$ is dlt.
Here,  $\pi^{-1}_*\kb_{0_R}$ is the strict transform of the (reduced) fibre $\kb_{0_R}$ and $E$ is the reduced
exceptional divisor of $\pi$. Hence, the dual complex $\Delta((\tilde\kb_{0_R})_{\rm red})$ 
of $(\tilde\kb_{0_R})_{\rm red}$ is well defined \cite{dFKX}.  As $(\kb_R,\kb_{0_R})$ is log-canonical,
the dlt modification satisfies $\pi^*(K_{\kb_R}+\kb_{0_R})=K_{\tilde \kb}+\pi^{-1}_*\kb_{0_R}+E$.
As $K_{\kb_R}$ and $\kb_{0_R}$ are both trivial divisors on $\kb_R$, also $K_{\tilde \kb}+\pi^{-1}_*\kb_{0_R}+E$ is trivial.
Hence, both assumptions of  \cite[Thm.\ 24]{KNX} are fulfilled, which immediately yields the next result.

\begin{cor}\label{cor:SkDual}
There exists a $\Gamma$-equivariant homeomorphism ${\rm Sk}(A)\cong\Delta((\tilde\kb_{0_R})_{\rm red})$ between the essential skeleton
of $A$ and the dual complex of the closed fibre $(\tilde\kb_{0_R})_{\rm red}$.\qed
\end{cor}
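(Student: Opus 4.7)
The plan is to reduce the statement to a direct application of the identification of the essential skeleton with the dual complex of a dlt model, as worked out in the Nicaise--Xu framework \cite{NiXu} and in the refined form of \cite[Thm.\ 24]{KNX}. That theorem applies to a log-canonical pair $(\kx,\kx_0)$ for which the log-canonical divisor $K_\kx+\kx_0$ is trivial: under these assumptions, the essential skeleton of the generic fibre is canonically homeomorphic to the dual complex of the central fibre of any dlt modification. So the task splits into (i) verifying these hypotheses and (ii) upgrading the resulting homeomorphism to a $\Gamma$-equivariant one.

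For (i), the preceding lemma provides exactly the algebro-geometric input required by \cite[Thm.\ 24]{KNX}: the pair $(\kb_R,\kb_{0_R})$ is log-canonical with reduced special fibre, and both $K_{\kb_R}$ and $\kb_{0_R}$ are trivial Cartier divisors on $\kb_R$. I would then invoke \cite[Thm.\ 1.34]{Kollar} to produce a $\Gamma$-equivariant dlt modification $\pi\colon\tilde\kb\to\kb_R$, for which $(\tilde\kb,(\tilde\kb_{0_R})_{\rm red}=\pi^{-1}_*\kb_{0_R}+E)$ is dlt. The log-canonical hypothesis gives the crepant formula $\pi^*(K_{\kb_R}+\kb_{0_R})=K_{\tilde\kb}+\pi^{-1}_*\kb_{0_R}+E$, which together with the triviality of $K_{\kb_R}+\kb_{0_R}$ shows that the right-hand side is also trivial. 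With both hypotheses now in place, \cite[Thm.\ 24]{KNX} produces a homeomorphism ${\rm Sk}(A)\cong\Delta((\tilde\kb_{0_R})_{\rm red})$.

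For (ii), the crucial observation is that the dlt modification $\pi$ is chosen $\Gamma$-equivariantly. Hence the $\Gamma$-action on $\kb_R$ lifts to $\tilde\kb$, preserves the decomposition $(\tilde\kb_{0_R})_{\rm red}=\pi^{-1}_*\kb_{0_R}+E$, and permutes the strata of this snc divisor, so it descends to a simplicial (in particular continuous) action on $\Delta((\tilde\kb_{0_R})_{\rm red})$. On the other side, the $\Gamma$-action on $A=\kb_{\eta_R}$ extends naturally to its Berkovich analytification and preserves the essential skeleton ${\rm Sk}(A)$, since the latter is intrinsically defined via the log-canonical model. The homeomorphism of \cite[Thm.\ 24]{KNX} is constructed from the dlt data in a way that is functorial with respect to automorphisms preserving the model, so it commutes with the $\Gamma$-actions on both sides.

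The main obstacle is not algebro-geometric but bookkeeping: ensuring that each step in the construction of the dlt modification and in the Nicaise--Xu comparison is performed compatibly with $\Gamma$. The $\Gamma$-equivariant existence of the dlt modification (via \cite[Thm.\ 1.34]{Kollar}) and the naturality of the homeomorphism in \cite[Thm.\ 24]{KNX} together take care of this, so the corollary follows.
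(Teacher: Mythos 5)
Your argument is exactly the one the paper uses: verify the hypotheses of \cite[Thm.\ 24]{KNX} via the preceding lemma, take a $\Gamma$-equivariant dlt modification as in \cite[Thm.\ 1.34]{Kollar}, use the crepant pullback formula to see that $K_{\tilde\kb}+\pi^{-1}_*\kb_{0_R}+E$ is trivial, and conclude. Your additional remarks on why the resulting homeomorphism is $\Gamma$-equivariant only make explicit what the paper leaves implicit, so there is nothing to correct.
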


Let now $W_0\subset\kb_{0_R}$ be an irreducible component and let $\Gamma'\subset \Gamma$ be the subgroup that leaves $W_0$ invariant.
Combining Corollary \ref{cor:trivactSk}, Corollay \ref{cor:SkDual}, and Remark \ref{rem:invW0}, we have proved the following consequence.

\begin{cor}
The composition $\Gamma'\subset\Gamma\twoheadrightarrow G$ is surjective and, therefore, $W=\overline W_0/K_C$ is a $G$-invariant subvariety
of $\kb_C/K_C\subset\kb/K$.\qed
\end{cor}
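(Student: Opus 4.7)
The plan is to combine Corollaries \ref{cor:trivactSk} and \ref{cor:SkDual} via a simple orbit--stabilizer argument on the vertex set of the dual complex. First, I would note that since the dlt modification $\pi\colon\tilde\kb\to\kb_R$ is $\Gamma$-equivariant, the induced $\Gamma$-action on the reduced fibre $(\tilde\kb_{0_R})_{\rm red}=\pi^{-1}_*\kb_{0_R}+E$ permutes its irreducible components, and hence also the vertex set of the dual complex $\Delta((\tilde\kb_{0_R})_{\rm red})$. The strict transform $\tilde W_0\subset\tilde\kb$ of the chosen component $W_0\subset\kb_{0_R}$ is itself an irreducible component of $(\tilde\kb_{0_R})_{\rm red}$ and therefore determines a vertex $v$; because $\pi$ is birational and $\Gamma$-equivariant, the stabilizer of $v$ in $\Gamma$ coincides with the stabilizer $\Gamma'$ of $W_0$.

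Next, Corollary \ref{cor:SkDual} gives a $\Gamma$-equivariant identification of ${\rm Sk}(A)$ with $\Delta((\tilde\kb_{0_R})_{\rm red})$, and Corollary \ref{cor:trivactSk} asserts that $G=\Gamma/\Gamma_0$ acts trivially on ${\rm Sk}(A)/\Gamma_0$. Transported to the dual complex, triviality of this action means that for every $\gamma\in\Gamma$ the images of $v$ and $\gamma\cdot v$ in $\Delta((\tilde\kb_{0_R})_{\rm red})/\Gamma_0$ agree, so there exists $\gamma_0\in\Gamma_0$ with $\gamma\cdot v=\gamma_0\cdot v$. Equivalently, $\gamma_0^{-1}\gamma\in\Gamma_v=\Gamma'$, and hence $\Gamma=\Gamma_0\cdot\Gamma'$. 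Projecting this equality to the quotient $G=\Gamma/\Gamma_0$ yields surjectivity of the composition $\Gamma'\subset\Gamma\twoheadrightarrow G$.

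The second half of the statement then requires no further argument: the hypothesis of Remark \ref{rem:invW0} is precisely that the stabilizer of $W_0$ maps onto $G$, which we have just established, so $\overline W_0\subset\kb_C$ is $G''_C$-invariant and $W=\overline W_0/K_C$ is a $G$-invariant subvariety of $\kb_C/K_C\subset\kb/K$. The only mild point of care is the identification of stabilizers under the $\Gamma$-equivariant birational map $\pi$, but this is immediate; the corollary is essentially a bookkeeping step assembling the ingredients of the preceding section, and I do not anticipate a genuine obstacle.
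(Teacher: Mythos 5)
Your proposal is correct and follows exactly the route the paper intends: the paper states this corollary with no written proof beyond ``combining Corollary \ref{cor:trivactSk}, Corollary \ref{cor:SkDual}, and Remark \ref{rem:invW0},'' and your orbit--stabilizer argument on the vertex of the dual complex corresponding to (the strict transform of) $W_0$, yielding $\Gamma=\Gamma_0\cdot\Gamma'$, is precisely the unwinding of that combination.
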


\subsection{} As the $G$-actions on $\kb/K$ and $Y$ are compatible under the birational map
$\xymatrix{\kb/K\ar@{<-->}[r]&Y,}$the subvariety $T\subset Y_0$ corresponding to $W$,
cf.\ Remark \ref{rem:TWdom}, is $G$-invariant. However, due to Proposition \ref{prop:actiongeom}, the action of $G$ on $T$
cannot be free, which contradicts Corollary \ref{cor:free}.

This concludes the proof of Theorem \ref{thm:main2} and, at the same time, of Theorem \ref{thm:main1}.


\end{document}